\DeclareMathOperator*{\argmin}{arg\,min}
\DeclareMathOperator*{\logsumexp}{LogSumExp}
\newtheorem{theorem}{Theorem}[section]
\newtheorem{definition}{Definition}[section]
\newtheorem{remark}{Remark}[section]
\newtheorem{corollary}{Corollary}[theorem]
\newtheorem{lemma}[theorem]{Lemma}
\newcommand\sqdiag{\tikz{\draw (0,0) rectangle (0.7em, 0.7em); \draw (0, 0.7em) -- (0.7em, 0);}} 
\DeclareMathOperator{\bdiag}{\sqdiag}
\DeclareMathOperator{\Cov}{Cov}
\def\*#1{\boldsymbol{\mathbf{#1}}}
\def\##1{\mathfrak{#1}}
\pgfplotsset{compat=1.18} 
\begin{document}
\nolinenumbers

\title{Ensemble-localized Kernel Density Estimation with Applications to the Ensemble Gaussian Mixture Filter}

% \Author[affil]{given_name}{surname}

\Author[1][andrey.a.popov@utexas.edu]{Andrey A.}{Popov} %% correspondence author
\Author[2]{Enrico M.}{Zucchelli}
\Author[2]{Renato}{Zanetti}

\affil[1]{Oden Institute for Computational Engineering \& Sciences, the University of Texas at Austin, Austin,
TX, 78712, USA}
\affil[2]{Dept of Aerospace Engineering \& Engineering Mechanics, the University of Texas at Austin, Austin,
TX, 78712, USA}

%% The [] brackets identify the author with the corresponding affiliation. 1, 2, 3, etc. should be inserted.

%% If an author is deceased, please mark the respective author name(s) with a dagger, e.g. "\Author[2,$\dag$]{Anton}{Smith}", and add a further "\affil[$\dag$]{deceased, 1 July 2019}".

%% If authors contributed equally, please mark the respective author names with an asterisk, e.g. "\Author[2,*]{Anton}{Smith}" and "\Author[3,*]{Bradley}{Miller}" and add a further affiliation: "\affil[*]{These authors contributed equally to this work.}".

\runningtitle{E-localized KDE for EnGMF}

\runningauthor{A.A. Popov, E.M. Zucchelli, R. Zanetti}

\received{}
\pubdiscuss{} %% only important for two-stage journals
\revised{}
\accepted{}
\published{}

%% These dates will be inserted by Copernicus Publications during the typesetting process.

\firstpage{1}

\maketitle

\begin{abstract}
The ensemble Gaussian mixture filter (EnGMF) is a non-linear filter suited to data assimilation of highly non-Gaussian and non-linear models that has practical utility in the case of a small number of samples, and theoretical convergence to full Bayesian inference in the ensemble limit. We aim to increase the utility of the EnGMF by introducing an ensemble-local notion of covariance into the kernel density estimation (KDE) step for the prior distribution. We prove that in the Gaussian case, our new ensemble-localized KDE technique is exactly the same as more traditional KDE techniques. We also show an example of a non-Gaussian distribution that can fail to be approximated by canonical KDE methods, but can be approximated well by our new KDE technique.
We showcase our new KDE technique on a simple bivariate problem, showing that it has nice qualitative and quantitative properties, and significantly improves the estimate of the prior and posterior distributions for all ensemble sizes tested. We additionally show the utility of the proposed methodology for sequential filtering for the Lorenz '63 equations, achieving a significant reduction in error, and less conservative behavior in the uncertainty estimate with respect to traditional techniques.
\end{abstract}

%\copyrightstatement{} %% This section is optional and can be used for copyright transfers.

\introduction  %% \introduction[modified heading if necessary]

The sample covariance is a good measure of the global relationship between state variables, but fails to capture local relationships between variables for distributions whose curvature is highly non-Gaussian. Canonical kernel density estimation techniques~\citep{silverman2018density,chen2017tutorial} often rely only on the global curvature of the data, and are thus ill-suited for advanced state estimation techniques relying thereon.

The first law of geography~\citep{tobler1970computer} states that ``everything is related to everything else, but near things are more related than distant things.'' In the context of state estimation and data assimilation for geophysical systems~\citep{asch2016data, reich2015probabilistic} this law begets heuristics known as covariance localization.
Common approaches to covariance localization rely on spatial distances between state and observation variables~\citep{asch2016data}. The two predominant approaches are B-localization, where the prior covariance is scaled in a way such that far away state variables do not influence each other, and R-localization, where the observation covariances are scaled in a way such that state variables that are far away from an observation are not influenced by it. These approaches fundamentally rely on a spatial understanding of state variables or observations, and are not fit for general problems where the state variables might not have a complete spatial structure. A new more general approach to localization is therefore required.

The aim of this work is to build a new kernel density estimation (KDE) technique for particle filtering.
While several computationally-intensive methods exist for density estimation in particle filters, such as clustering~\citep{yun2021clustering} and `blob' filtering~\citep{psiaki2015gaussian,psiaki2016gaussian}, our goal is to construct a method that can both perform accurate kernel density estimation while simultaneously being fast enough to utilize in an online setting.

The previous conference paper~\citep{popov2023elengmf} on which this work is based, applied a largely heuristic idea of finding a local measure of covariance through the use of distance-based weightings.
In this work we formalize the notion of local covariance by posing the problem in terms of recovering the optimal Gaussian covariance from a localized conditional distribution.

A related parallel effort~\citep{zucchelli2024bayesian}  developed an EnGMF where each kernel's covariance is dependent on the relative density of neighboring particles. All the k closest particles in the state-space equally contribute to the computation of the covariance of a kernel. With the aid of k-dimensional trees, the cost in the covariance generation is small. The resulting filter is used to track maneuvering spacecraft, whose transitional prior distributions are strongly non-Gaussian.

We propose a novel approach to localization that does not require an innate spatial structure in the state variables, but instead relies on some measure of inter-particle distance. 
Continuing the naming trend, we call our approach E-localization, standing for ensemble localization.
In this work we apply this methodology to kernel density estimation, creating the E-localized KDE (ELKDE) and to the EnGMF to beget the E-localized EnGMF (ELEnGMF).

We show that the proposed approach is able to more accurately describe the prior distribution with fewer samples, and that the resulting description of the prior leads to a better posterior. Results consist of both qualitative and quantitative analysis on a simple test problem. We additionally show that our approach begets better performance in the sequential filtering setting.

This work is organized as follows: we first provide some background on kernel density estimation, on the filtering problem, and on the EnGMF in~\cref{sec:background}. We next introduce the E-localization methodology in~\cref{sec:particle-localized-KDE}.
Penultimately, we showcase the ELKDE on a bivariate spiral example, and the ELEnGMF on the Lorenz '63 equations, in~\cref{sec:numerical-experiments}.
Finally, we have some concluding remarks in~\cref{sec:conclusions}.

%%%%%%%%%%%%%%%%%%%%%%%%%%%%%%%%%%
\section{Background}
\label{sec:background}
%%%%%%%%%%%%%%%%%%%%%%%%%%%%%%%%%%

We now introduce kernel density estimation (KDE) and the ensemble Gaussian mixture filter (EnGMF). 

%%%%%%%%%%%%%%%%%%%%%%%%%%%%%%%%%%
\subsection{Kernel Density Estimation}
\label{sec:KDE}
%%%%%%%%%%%%%%%%%%%%%%%%%%%%%%%%%%

Suppose that we have an ensemble $\*X_N = [x_1, \dots, x_N]$, of $N$ samples of dimension $n$ from the random variable $X$, which has the probability distribution $p_X$.

When we do not know $p_X$ directly, our goal is to approximate $p_X$ using the ensemble $\*X_N$ and all our other knowledge. We can build $\widehat p_X$, a kernel-based estimate of $p_X$, 
\begin{equation}\label{eq:full-KDE-estimate}
    \widehat{p}_{X}(x) = \frac{1}{N}\sum_{i = 1}^N K_{\Sigma_i}(x - x_i),
\end{equation}
where each $\Sigma_i$ is a positive definite matrix, and the kernels $K_{\Sigma_i}$ are functions that satisfy the following,
\begin{equation}
\begin{gathered}
    K_{\Sigma_i}(x) \geq 0, \forall x,\\
    \int_{\mathbb{R}^n} K_{\Sigma_i}(x)\mathrm{d}x = 1,\\
    \mathbb{E}[K_{\Sigma_i}(x)] = 0,\\
    0 \prec \Cov(K_{\Sigma_i}(x)) \prec \infty,\\
    K_{\Sigma_i}(x) = \lvert \Sigma_i\rvert^{-1/2} K_{I}\left(\Sigma_i^{-1/2}x\right),
\end{gathered}
\end{equation}
where $I$ is the identity matrix. The first two properties ensure that the kernels are  probability densities. The third ensures that they are mean zero, with the fourth ensuring that they have finite non-zero  covariance (in the SPD sense). The last property relates all the different parameterizations of the kernels to one another through the kernel with unit covariance.  

Note that there are $n(n-1)N + 1$ parameters of interest in the KDE estimate~\cref{eq:full-KDE-estimate}, namely the choice of kernel $K$, and the set of covariances $\{\Sigma_i\}_{i=1}^N$.

In this work we focus on the Gaussian kernel,
\begin{equation}\label{eq:Gaussian-kernel}
    K_{\Sigma_i}(x) =  (2 \pi)^{-n/2} \lvert\Sigma_i\rvert^{-1/2} e^{-\frac{1}{2} (x^T\Sigma_i^{-1}x)},
\end{equation}
as this makes the Kernel density estimate a Gaussian mixture model (GMM). GMMs are useful in state estimation as a GMM prior with a linear GMM observation leads to a GMM posterior~\citep{alspach1972nonlinear}.

Given knowledge of the covariance $\mathfrak{S}$ of the data, the canonical choice of the covariances in~\cref{eq:full-KDE-estimate}, is
\begin{equation}\label{eq:covariance-bandwidth-scaling}
    \Sigma_i = \beta^2_N \mathfrak{S},\quad i = 1,\dots, N,
\end{equation}
where $\beta^2_N$ is a scalar \textit{bandwidth} factor. The optimal bandwidth factor both depends on the choice of kernel $K$ and crucially on the underlying distribution of the data. As the underlying distribution is typically not known, the scaling factor that is optimal under Gaussian assumptions is typically chosen. 

The following result about the choice of optimal bandwidth is due to \citep{silverman2018density}:
\begin{lemma}
\label{thm:optimal-bandwidth}
    Without proof, if the distribution of interest, $p_X$ is Gaussian with mean $\mathfrak{m}$ and covariance $\mathfrak{S}$, and the kernel is Gaussian~\cref{eq:Gaussian-kernel}, then the bandwidth factor,
    \begin{equation}\label{eq:silvermans-rule}
        \beta^2_{N} = \left(\frac{4}{N(n + 2)}\right)^{\frac{2}{n + 4}},
    \end{equation}
    minimizes the mean integral squared error,
    \begin{equation}\label{eq:MISE}
    \mathbb{E}_{\mathbf{X}_N}\left[\int_{\mathbb{R}^n} \left(p_X(x) - \widehat{p}_{X}(x)\right)^2 \mathrm{d}x\right],
    \end{equation}
    between the true distribution and its KDE given by~\cref{eq:full-KDE-estimate}.
\end{lemma}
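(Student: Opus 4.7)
The plan is to work with the leading-order asymptotic form of the MISE (sometimes called the AMISE), because Silverman's formula is precisely the minimizer of this asymptotic expansion rather than of the exact MISE. I would begin with the pointwise bias--variance decomposition
\[
\mathbb{E}_{\*X_N}\bigl[(p_X(x) - \widehat p_X(x))^2\bigr] = \bigl(\mathbb{E}[\widehat p_X(x)] - p_X(x)\bigr)^2 + \operatorname{Var}[\widehat p_X(x)],
\]
and exploit the i.i.d.\ structure of the ensemble so that $\mathbb{E}[\widehat p_X(x)] = (K_{\beta^2 \mathfrak{S}} \ast p_X)(x)$. A second-order Taylor expansion of $p_X$ inside this convolution, together with the zero-mean and covariance properties of the kernel, yields the leading-order bias $\tfrac{1}{2}\beta^2 \operatorname{tr}(\mathfrak{S}\nabla^2 p_X(x))$. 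Similarly, $\operatorname{Var}[\widehat p_X(x)] = N^{-1}\operatorname{Var}[K_{\beta^2 \mathfrak{S}}(x - x_1)]$ is dominated by $N^{-1}\int K_{\beta^2 \mathfrak{S}}(x-y)^2 p_X(y)\,\mathrm{d}y$, which to leading order equals $\frac{R(K_I)}{N\beta^n|\mathfrak{S}|^{1/2}}\,p_X(x)$, where $R(K_I) = \int K_I(u)^2\,\mathrm{d}u = (4\pi)^{-n/2}$ for the Gaussian kernel of~\cref{eq:Gaussian-kernel}.

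Integrating these pieces over $\mathbb{R}^n$ produces the AMISE
\[
\mathrm{AMISE}(\beta) = \frac{\beta^4}{4}\int_{\mathbb{R}^n} [\operatorname{tr}(\mathfrak{S}\nabla^2 p_X(x))]^2 \,\mathrm{d}x + \frac{R(K_I)}{N\beta^n |\mathfrak{S}|^{1/2}}.
\]
The genuinely Gaussian-specific step is to evaluate the roughness integral. Substituting $z = \mathfrak{S}^{-1/2}(x - \mathfrak{m})$ reveals the identity $\operatorname{tr}(\mathfrak{S}\nabla^2 p_X(x)) = |\mathfrak{S}|^{-1/2}\phi(z)(\|z\|^2 - n)$, where $\phi$ is the standard $n$-dimensional Gaussian density. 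Rewriting $\phi(z)^2 = (4\pi)^{-n/2}\mathcal{N}(z;0,\tfrac{1}{2}I)$ and invoking the standard moments $\mathbb{E}[\|U\|^2] = n$ and $\mathbb{E}[\|U\|^4] = n(n+2)$ for $U\sim\mathcal{N}(0,I)$ gives, after a short calculation, $\int [\operatorname{tr}(\mathfrak{S}\nabla^2 p_X)]^2\,\mathrm{d}x = \frac{n(n+2)}{4(4\pi)^{n/2}|\mathfrak{S}|^{1/2}}$.

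Finally, the first-order condition $\partial\mathrm{AMISE}/\partial\beta = 0$ gives $\beta^{n+4} = \frac{n R(K_I)}{N|\mathfrak{S}|^{1/2}\int[\operatorname{tr}(\mathfrak{S}\nabla^2 p_X)]^2\,\mathrm{d}x}$. Substituting the Gaussian values above causes every $|\mathfrak{S}|^{1/2}$ and $(4\pi)^{n/2}$ factor to cancel cleanly, leaving $\beta^{n+4} = 4/[N(n+2)]$, and raising both sides to the power $2/(n+4)$ recovers~\cref{eq:silvermans-rule} exactly. The main obstacle I would expect is not the algebra but rather the rigorous justification of the Taylor remainder in the bias expansion and of the $\beta \to 0$ approximation of the variance integral; however, the Schwartz-class decay of the Gaussian density and all of its derivatives renders these essentially routine dominated-convergence arguments rather than genuine difficulties.
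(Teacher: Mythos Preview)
Your derivation is the standard Silverman argument and is correct. Note, however, that the paper does not supply a proof of this lemma at all: the statement itself begins with the words ``Without proof,'' and the result is simply attributed to \cite{silverman2018density}. So there is nothing in the paper to compare your argument against; you have filled in a proof the authors deliberately omitted.

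One point worth flagging is that you are more careful than the lemma's own wording. You correctly observe that \cref{eq:silvermans-rule} minimizes the \emph{asymptotic} MISE (AMISE), obtained from the leading-order bias--variance expansion, rather than the exact MISE in \cref{eq:MISE}. The lemma as stated elides this distinction. Your computation of the roughness functional $\int[\operatorname{tr}(\mathfrak{S}\nabla^2 p_X)]^2\,\mathrm{d}x = n(n+2)/[4(4\pi)^{n/2}|\mathfrak{S}|^{1/2}]$ via the substitution $z=\mathfrak{S}^{-1/2}(x-\mathfrak{m})$ and the chi-squared moments is clean, and the cancellation of the $|\mathfrak{S}|^{1/2}$ and $(4\pi)^{n/2}$ factors in the first-order condition is exactly how the dimension-only formula \cref{eq:silvermans-rule} emerges.
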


The approach used in~\cref{thm:optimal-bandwidth} we term the canonical KDE, or CKDE for short.

While the result in~\cref{thm:optimal-bandwidth} is satisfactory for Gaussian distributions, it does not provide us any intuition about how to choose the covariances in~\cref{eq:full-KDE-estimate} for non-Gaussian KDE.

One approach is to manually tune the bandwidth estimate~\cref{eq:silvermans-rule}, as it is known to typically be too large in the case of highly non-Gaussian data~\citep{silverman2018density, janssen1995scale}. A constant scaling factor,
\begin{equation}\label{eq:bandwidth-scaling-factor}
    \beta^2_N \xleftarrow[]{} s_\beta \beta^2_{N},
\end{equation}
with $0 < s_\beta$, can be applied to account for this deficiency. Tuning $s_\beta$ is difficult to perform correctly in a sequential filtering scenario, as a constant scaling factor may not be optimal for the wide range of possible distributions that can arise. Another alternative is to determine the scaling factor in an adaptive, online, fashion~\citep{popov2022adaptive} using the observation likelihood. The adaptive scaling of the covariances can still suffer from overfitting if the algorithm is not carefully tuned to the problem at hand.
A third alternative idea, called adaptive kernel density estimation (AKDE)~\citep{silverman2018density}, is to create a sequence of adaptive scaling factors,
\begin{equation}
\begin{gathered}
    \lambda_i = \left(\widehat{p}_{X}(x_i)/g\right)^{-\alpha},\\
    \text{where}\quad \log g = \frac{1}{N}\sum_{i=1}^N \log \widehat{p}_{X}(x_i),
\end{gathered}
\end{equation}
and $\alpha$ is a scaling factor typically taken to be $n^{-1}$, and every covariance is scaled by the factor
\begin{equation}
    \beta^2_i = \lambda_i^2\beta^2_{N}.
\end{equation}
Yet still, the AKDE idea suffers from the fact that the local curvature of the KDE estimate is determined by the global covariance of the whole distribution, and thus might be a poor approximation to the true non-Gaussian distribution.

%%%%%%%%%%%%%%%%%%%%%%%%%%%%%%%%%%
\subsection{Ensemble Gaussian Mixture Filter}
\label{sec:EnGMF}
%%%%%%%%%%%%%%%%%%%%%%%%%%%%%%%%%%

We first provide a brief overview of the state estimation/data assimilation problem.
Assume that we wish to estimate the true state $x^t$ of some physical process, but only have access to noisy non-linear observations of it,
\begin{equation}\label{eq:observation-operator}
    y = h(x^t) + \eta,
\end{equation}
where $\eta$ is unbiased additive Gaussian noise,
\begin{equation}
    \eta \sim \mathcal{N}(0, R),
\end{equation}
and $R$ is the observation error covariance.
Given a random variable $X^-$ that describes our prior knowledge about the state, we use Bayesian inference to find a random variable,
\begin{equation}\label{eq:Bayesian-inference}
    X^+ = \left.X^- \middle\vert \left(Y=y\right)\right.,
\end{equation}
that describes our posterior knowledge of the state, given the newfound observation~\cref{eq:observation-operator}.

We now describe the inner workings of the EnGMF~\citep{anderson1999monte,liu2016efficient,yun2022kernel,popov2022adaptive}, in order to understand some of its deficiencies.

The EnGMF operates in three discrete steps: (i) estimation of the prior density with an ensemble Gaussian mixture, (ii) update of the Gaussian mixture from prior to posterior, and (iii) resampling of exchangeable samples from the posterior. 

Given an ensemble of $N$ independently and identically distributed (iid)  samples from the prior distribution, $\*X^-_N = \begin{bmatrix}x^-_{1},\, \dots,\, x^-_{N}\end{bmatrix}$, using the KDE form~\cref{eq:full-KDE-estimate} with Gaussian kernel~\cref{eq:Gaussian-kernel} to construct an approximation of the prior distribution as a Gaussian mixture model (GMM),
\begin{equation}\label{eq:prior-GMM}
    \widehat{p}_{X^-}(x) = \frac{1}{N}\sum_{j=1}^N \,\mathcal{N}\left(x \,;\, x^-_{j},\, \widehat{\Sigma}^-_{j}\right),
\end{equation}
where each mean is centered at one of the ensemble members (particles), and the covariances $\widehat{\Sigma}^-_{j}$ are chosen in a way that attempts to make the distribution estimate more accurate.

The EnGMF formulas, used to construct the posterior GMM estimate, are,
\begin{equation}\label{eq:EnGMF-update}
    \begin{aligned}
        x^\sim_{j} &= x^-_{j} - \*G_{j}\left(h(x^-_{j}) - \*y\right),\\
        \widehat{\Sigma}^+_{j} &= \left(\*I -  \*G_{j}\*H_{i}^T\right)\widehat{\Sigma}^-_{j},\\
        \*G_{j} &= \widehat{\Sigma}^-_{j}\*H_{j}^T{\left(\*H_{j}\widehat{\Sigma}^-_{j}\*H_{j}^T + \*R_i\right)}^{-1},\\
        v_{j} &=  \frac{\mathcal{N}\left(\*y\,;\vert\, h(x^-_{j}),\, \*H_{j}\widehat{\Sigma}^-_{j}\*H_{j}^T + \*R\right)}{\sum_{j=1}^N\mathcal{N}\left(\*y\,;\, h(x^-_{j}),\, \*H_{j}\widehat{\Sigma}^-_{i,j}\*H_{j}^T + \*R\right)},\\
        \*H_{j} &= \left.\frac{d h}{d x}\right\rvert_{x = x^-_{j}},
    \end{aligned}
\end{equation}
where $x^\sim_{j}$ are the means of the posterior GMM, $\widehat{\Sigma}^+_{j}$ are the posterior covariances, $\*G_{j}$ are the particle gain matrices, $v_{j}$ are the weights of the posterior GMM modes,  and $\*H_{j}$ are the linearizations of the observation operator~\cref{eq:observation-operator} about the ensemble members (particles). When the observations are linear, this update formula is exact.

\begin{remark}
    Note that the gain matrices $\*G_j$ in~\cref{eq:EnGMF-update} are actually statistical gains, and as such are biased estimators~\citep{popov2020explicit} of the exact gains. Techniques such as ensemble inflation can be applied to compensate for this bias, though in practice, for the EnGMF with resampling, this issue has not shown to have a significant impact.
\end{remark}

\begin{remark}
    The mean and covariance updates in~\cref{eq:EnGMF-update} behave in a similar fashion to that of the extended Kalman filter. While this ensures that the EnGMF converges~\citep{popov2022adaptive} to `exact Bayesian inference', in the case of a non-linear model and non-Gaussian distribution, a poor description of the prior would lead to an even worse description of the posterior. An alternative formulation, using the unscented transform has been utilized by~\citep{reifler2021multi,reifler2023large}.
\end{remark}

Through the use of the mean and covariance updates in~\cref{eq:EnGMF-update}, the estimate of the posterior GMM can be constructed,
\begin{equation}\label{eq:posterior-GMM}
    \widehat{p}_{X^+}(x) = \sum_{j=1}^N v_{j} \,\mathcal{N}\left(x \,;\, x^\sim_{j},\, \widehat{\Sigma}^+_{j}\right),
\end{equation}
which is an approximation of the true posterior.

% SAMPLING FROM THE EnGMF
In the final step of the EnGMF, an ensemble of exchangeable samples from the posterior GMM~\cref{eq:posterior-GMM} is computed, resulting in $\*X^+_N = \begin{bmatrix}x^+_{1},\, \dots,\, x^+_{N}\end{bmatrix}$. There are many ways by which a Gaussian mixture model can be sampled~\citep{gilitschenski2014deterministic,liu2016efficient}. In this work we focus on the naive approach. First sample $N$ indices $j$ from the distribution defined by the weights $v_j$ from~\cref{eq:EnGMF-update}. For each index, take its corresponding Gaussian component in~\cref{eq:posterior-GMM} and sample a random variable from it, by the canonical procedure. 
For details on implementation of the resampling procedure for Gaussian mixture models see~\citep{liu2016efficient}.

\begin{remark}
    This resampling process does not generate iid samples from the approximate posterior distribution, as the posterior GMM weights, means, and covariances are all sample statistics, and are thus random variables derived from the prior ensemble.
    This means that samples from the posterior distribution are all conditioned on the prior ensemble, and the samples are therefore merely  exchangeable. In practice, the error of treating these samples as iid does not seem to make a significant impact when the ensemble size is large enough.
\end{remark}

The choice of covariance in the prior GMM~\cref{eq:prior-GMM} dictates the convergence of the EnGMF algorithm.
In the canonical EnGMF~\citep{anderson1999monte} the prior covariances are chosen as to align with~\cref{thm:optimal-bandwidth}, with the global covariance of the system estimated by the empirical covariance,
\begin{equation}\label{eq:global-sample-covariance}
    \widehat{\Sigma}^- = \frac{1}{N-1}\*X^-_N\left(\*I_N - \frac{1}{N}\*1_N\*1_N^T\right)\*X^{-,T}_N, 
\end{equation}
which is an unbiased estimator of the true covariance when the samples $\*X_N$ are iid.

In state estimation and data assimilation, the prior state $X^-$ is information sampled at a preceding step and propagated through a highly non-linear model, thus the distributions resulting from this propagation become highly non-Gaussian even if originally sampled from a Gaussian; consequently the empirical central second moment of the distribution~\cref{eq:global-sample-covariance} carries less of the total information than for Gaussian filters.

Though the EnGMF contains the word ``ensemble'', it is really a particle filter, though the distinction is not discussed further in this work.

%%%%%%%%%%%%%%%%%%%%%%%%%%%%%%%%%%%%
\subsection{Motivation through a bimodal example}
\label{sec:bimodal-example}
%%%%%%%%%%%%%%%%%%%%%%%%%%%%%%%%%%%%

We now show why the CKDE estimate with the Silverman bandwidth~\cref{thm:optimal-bandwidth} is a poor choice for some non-Gaussian distributions.

We construct a bivariate (in terms of two state variables $x = \begin{bmatrix} x_1 & x_2\end{bmatrix}^T$) Gaussian mixture model consisting of two constituent terms such that the local covariance of each constituent term in the mixture is significantly different from the global covariance of the total mixture. Each constituent term has the same local covariance,
\begin{equation}\label{eq:GM2-local-covariance}
    \mathfrak{S}_{\text{local}} = \begin{bmatrix} 1 & \rho\\\rho & 1\end{bmatrix},
\end{equation}
where $\rho$ is the correlation coefficient.
We separate the modes of the two Gaussian terms in the $x_2$ direction by a constant half-height $\upsilon$, setting the means of the two constituent terms to,
\begin{equation}
    \mathfrak{m}_1 = \begin{bmatrix}0 &
 \upsilon\end{bmatrix}^T,\quad \mathfrak{m}_2 = \begin{bmatrix}0 & -\upsilon\end{bmatrix}^T.
\end{equation}
Explicitly, the Gaussian mixture distribution that we analyze is,
\begin{equation}\label{eq:GM2-distribution}
    p_X(x) = \frac{1}{2}\left[\mathcal{N}(x ;\mathfrak{m}_1, \mathfrak{S}_{\text{local}}) + \mathcal{N}(x ;\mathfrak{m}_2, \mathfrak{S}_{\text{local}})\right],
\end{equation}
where each term is equally weighted.

By a simple application of known formulas~\citep{GMcovarianceStackOverflow} the global covariance of~\cref{eq:GM2-distribution} is given by,
\begin{equation}\label{eq:GM2-global-covariance}
    \mathfrak{S}_{\text{global}} = \begin{bmatrix} 1 & \rho\\\rho & 1 + \upsilon^2\end{bmatrix},
\end{equation}
meaning that as we increase the separation half-height $\upsilon$, the second state variable starts to dominate the global covariance.

We can formalize this intuition by looking at the dominant eigenvectors of the covariances.
When the correlation coefficient $\rho$ is positive, the dominant eigenvector of the local covariance~\cref{eq:GM2-local-covariance} is,
\begin{equation}\label{eq:local-dominant-eigenvector}
    \begin{bmatrix}
1 & 1    
\end{bmatrix}^T,
\end{equation}
which is not dependent on either the correlation coefficient $\rho$ or the separation half-height $\upsilon$.
The dominant eigenvector of the global covariance~\cref{eq:GM2-global-covariance}, on the other hand, is,
\begin{equation}\label{eq:global-dominant-eigenvector}
        \begin{bmatrix}
\frac{-\upsilon^2 + \sqrt{\upsilon^4 + 4 \rho^2}}{2\rho} & 1    
    \end{bmatrix}^T,
\end{equation}
which tends towards an eigenvector of,
\begin{equation}
    \begin{bmatrix}
0 & 1    
\end{bmatrix}^T,
\end{equation}
as the separation half-height $\upsilon$ tends towards infinity. The eigenvalue corresponding to this eigenvector is,
\begin{equation}
    \frac{1}{2} \left(\sqrt{\nu ^4+4 \rho ^2}+\nu ^2+2\right),
\end{equation}
which tends towards infinity as the separation half-height $\nu$ increases. 
This creates a large discrepancy between the global covariance~\cref{eq:GM2-global-covariance}, and the local covariance~\cref{eq:GM2-local-covariance}. 
Thus, the CKDE approximation induced by the bandwidth scaling of the global covariance~\cref{eq:covariance-bandwidth-scaling} fails to capture the local density for a sufficiently large $\nu$.
In other words, a distribution that is representable in terms of a Gaussian mixture with finite covariances would degenerate to not being representable using the CKDE method.

From this example, it is clear that a method of determining the local covariance structure of the distribution is needed.

%%%%%%%%%%%%%%%%%%%%%%%%%%%%%%%%%%%%%%%%%%%%%%%%%%%
\section{Ensemble-localized Kernel density estimation}
\label{sec:particle-localized-KDE}
%%%%%%%%%%%%%%%%%%%%%%%%%%%%%%%%%%%%%%%%%%%%%%%%%%%

When the distribution of interest $p_X$ is non-Gaussian, the global covariance can create a poor approximation of the local relationship of the data. 
Instead of looking for the optimal bandwidth like in~\cref{thm:optimal-bandwidth}, we go back to the fundamentals of KDE estimates~\cref{eq:full-KDE-estimate}, and attempt to find covariance matrices $\{\Sigma_i\}_{i=1}^N$ that approximate the distribution locally around each particle.

We focus on attempting to understand the local covariance structure of the distribution, $p_X$, around a sample, $x_i$, therefrom. Essentially, we attempt to answer the following question:
if the local behavior of the distribution around the point $x_i$ extended globally, what would the covariance of the resulting distribution be?

In order to attempt to answer this question, we introduce the synthetic random variable $\mathcal{L}$ with the same support as $X$ and whose conditional distribution $\mathcal{L}|X=x$ is the description of the local information around the point $x$, and attempt to extract the local covariance around $x$ through understanding the distribution of $X | \mathcal{L} = x_i$ which is the distribution of $X$ conditioned on the local information around the realization $x_i$. We term this idea `E-localization', and its application to kernel density estimation, E-localized KDE or `ELKDE' for short.

From Bayes' rule, we can see that,
\begin{equation}
    p_{X | \mathcal{L} = x_i }(x)  \propto p_{\mathcal{L} | X = x }( x_i ) \ p_X(x),
\end{equation}
thus, the knowledge contained in the distribution of $p_{\mathcal{L} | X = x }$  is sufficient to explain all the additional, local, information.

In this work we explore the following choice:
\begin{equation}\label{eq:Gaussian-localization-choice}
    p_{\mathcal{L} | X = x}(\xi) = C e^{-\frac{1}{2} ((\xi - x)^T S_\xi^{-1}(\xi - x))},
\end{equation}
where $C$ is a normalizing constant and the symmetric positive definite matrix $S_\xi$ is an application-specific tunable parameter that is unique to every $\xi$ around which we wish to localize. This means, in effect that a different choice of~\cref{eq:Gaussian-localization-choice} is made for every individual particle.

Our goal is to find a covariance matrix, $\widetilde{\Sigma}_i$, that represents the local covariance structure of $p_X$ around the point $x_i$.
In the case when the random variable $X$ is Gaussian, the ELKDE method should exactly recover the CKDE method, providing equivalence and some limited sense of optimality by~\cref{thm:optimal-bandwidth}.

Thus, we first look at the case when $p_X$ is Gaussian with known mean and covariance.
If the distribution is Gaussian, its local behavior around every single point has to be Gaussian as well. Thus, it should be possible to recover the global covariance  
from the information contained in $p_{X|\mathcal{L}=x_i}$, for any choice of the parameter $S_\xi$ in~\cref{eq:Gaussian-localization-choice}.

In the subsequent proof, we show that by only knowing information about $p_{X|\mathcal{L}}$ and the covariance 
$S_{\xi}$ from~\cref{eq:Gaussian-localization-choice}, it is possible to recover the covariance of the information contained within $p_X(x)$, with the knowledge of $X$ is Gaussian.

\begin{theorem}[Approximate Gaussian covariance with E-localization]\label{thm:ELKDE-Gaussian-covariance}
    Take the random variable $X$ to be Gaussian,
    \begin{equation}
        p_X(x) = \mathcal{N}(x;\mathfrak{m}, \mathfrak{S}),
    \end{equation}
    with $p_X$ being the distribution, $\mathfrak{m}$ the mean, and $\mathfrak{S}$ the covariance.
    Take also the ensemble of $N$ samples $\mathbf{X} = [x_1, \dots, x_N]$ therefrom.

    If the distribution of the conditional realizations $\{x_i\}_{i=1}^N$ are as in~\cref{eq:Gaussian-localization-choice},
    \begin{equation}
        p_{\mathcal{L} | X = x }(x_i) = \frac{C}{ \lvert S_i\rvert^{\frac{1}{2}}} e^{-\frac{1}{2} ((x_i - x)^T S_i^{-1}(x_i - x))},
    \end{equation}
    with matrices $\{S_i\}_{i=1}^N$, standing for $\{S_{x_i}\}_{i=1}^N$, then the  ensemble localized density estimate, 
    \begin{equation}
        \widehat{p}_{X}(x) = \frac{1}{N}\sum_{i=1}^N K_{\widehat{\Sigma}_i}(x - x_i),
    \end{equation}
    with covariances,
    \begin{equation}\label{eq:ELKDE-Gaussian-covariance}
    \begin{aligned}
        \widehat{\Sigma}_i &= \beta^2_N\widetilde{\Sigma}_i,\\
        \widetilde{\Sigma}_i&=\Cov(X|\mathcal{L}\!=\!x_i){\left[S_i - \Cov(X|\mathcal{L}\!=\!x_i)\right]}^{-1} S_i,
    \end{aligned}
    \end{equation}
    where $i = 1, \dots N$, is exactly equivalent to the canonical kernel density estimate from~\cref{thm:optimal-bandwidth} for any choice of $\{S_i\}_{i=1}^N$.
\end{theorem}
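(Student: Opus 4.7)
The plan is to exploit the fact that when $X$ is Gaussian and $\mathcal{L}\vert X=x$ is Gaussian with covariance $S_i$, the pair $(X,\mathcal{L})$ is jointly Gaussian, so the conditional $X\vert\mathcal{L}=x_i$ is itself Gaussian with mean and covariance given by the standard Gaussian conditioning formulas. The crux is to show that the quantity $\widetilde{\Sigma}_i$ in~\cref{eq:ELKDE-Gaussian-covariance} collapses to the \emph{global} covariance $\mathfrak{S}$ for any $S_i$. Once this is established, $\widehat{\Sigma}_i = \beta_N^2 \mathfrak{S}$ matches the canonical scaling~\cref{eq:covariance-bandwidth-scaling}, so the ELKDE and CKDE Gaussian-mixture expansions agree term by term, and~\cref{thm:optimal-bandwidth} delivers the equivalence.

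First I would write down the joint Gaussian model: with $X\sim\mathcal{N}(\mathfrak{m},\mathfrak{S})$ and $\mathcal{L}\vert X=x\sim\mathcal{N}(x,S_i)$, one has $\mathcal{L}\sim\mathcal{N}(\mathfrak{m},\mathfrak{S}+S_i)$ marginally, and by the usual conditioning formula (equivalently, by completing the square in the log-density),
\begin{equation*}
\Cov(X\vert\mathcal{L}=x_i) \;=\; \mathfrak{S}-\mathfrak{S}(\mathfrak{S}+S_i)^{-1}\mathfrak{S} \;=\; \bigl(\mathfrak{S}^{-1}+S_i^{-1}\bigr)^{-1}.
\end{equation*}
Denoting this matrix by $C$, the precision form yields the single identity I will actually use, namely $C^{-1} - S_i^{-1} = \mathfrak{S}^{-1}$.

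Second, I would substitute $C$ into $\widetilde{\Sigma}_i = C(S_i-C)^{-1}S_i$ and invert both sides to obtain
\begin{equation*}
\widetilde{\Sigma}_i^{-1} \;=\; S_i^{-1}(S_i-C)C^{-1} \;=\; C^{-1} - S_i^{-1} \;=\; \mathfrak{S}^{-1},
\end{equation*}
so $\widetilde{\Sigma}_i = \mathfrak{S}$ regardless of the choice of $S_i$. Multiplying by the Silverman bandwidth then gives $\widehat{\Sigma}_i = \beta_N^2 \mathfrak{S}$ for every $i$, and the sum defining the ELKDE coincides pointwise with the CKDE of~\cref{thm:optimal-bandwidth}.

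The main obstacle is essentially algebraic bookkeeping. The most delicate points I expect are (i) verifying the equivalence of the two forms of $C$ — the Schur-complement form $\mathfrak{S}-\mathfrak{S}(\mathfrak{S}+S_i)^{-1}\mathfrak{S}$ and the precision-sum form $(\mathfrak{S}^{-1}+S_i^{-1})^{-1}$ — via a short Woodbury computation, and (ii) justifying invertibility of $S_i - C$. The latter follows because $C \prec S_i$ in the positive-definite order whenever $\mathfrak{S}$ is positive definite, so $S_i-C$ is positive definite and the formula in~\cref{eq:ELKDE-Gaussian-covariance} is well posed. I would also emphasize that the cancellation $S_i^{-1}(S_i-C)C^{-1} = C^{-1} - S_i^{-1}$ never requires $S_i$ and $C$ to commute; it uses only associativity and $CC^{-1}=I$, which is the sole place a careful reader might worry about matrix ordering.
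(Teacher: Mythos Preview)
Your proposal is correct and follows essentially the same approach as the paper: reduce to showing $\widetilde{\Sigma}_i=\mathfrak{S}$ by plugging in the Gaussian conditional covariance and simplifying. The only difference is cosmetic---the paper uses the Kalman form $\Cov(X\mid\mathcal{L}=x_i)=\mathfrak{S}(\mathfrak{S}+S_i)^{-1}S_i$ and manipulates $\widetilde{\Sigma}_i$ directly, whereas you work with the precision form $(\mathfrak{S}^{-1}+S_i^{-1})^{-1}$ and invert $\widetilde{\Sigma}_i$ first, which arguably makes the cancellation slightly cleaner and has the bonus of explicitly addressing the well-posedness of $(S_i-C)^{-1}$ that the paper leaves implicit.
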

\begin{proof}
    It suffices to show that for all $i$, every local covariance is equal to the global Gaussian covariance,  $\widetilde{\Sigma}_i = \mathfrak{S}$.
    From the Kalman filter equations, it is known that,
    \begin{gather*}
            \Cov(X | \mathcal{L}\!=\!x_i) = \mathfrak{S} - \mathfrak{S}{\left(\mathfrak{S} + S_i\right)}^{-1}\mathfrak{S},\\
            \begin{aligned}
            \phantom{S}&= \mathfrak{S}\ {\left(\mathfrak{S} + S_i\right)}^{-1} {\left(\mathfrak{S} + S_i\right)}- \mathfrak{S}{\left(\mathfrak{S} + S_i\right)}^{-1}\mathfrak{S},
            \\
            % \phantom{S}&= \mathfrak{S} - \mathfrak{S}\left[\mathfrak{S}^{-1} - \mathfrak{S}^{-1}\left(\mathfrak{S}^{-1} + S_i^{-1}\right)^{-1}\mathfrak{S}^{-1}\right]\mathfrak{S},\\
            % &= \left(\mathfrak{S}^{-1} + S_i^{-1}\right)^{-1},\\
            &= \mathfrak{S}{\left(\mathfrak{S} + S_i\right)}^{-1}S_i,
        \end{aligned}
    \end{gather*}
    therefore the covariances are,
    \begin{align*}
        \widetilde{\Sigma}_i &= \mathfrak{S}{\left(\mathfrak{S} + S_i\right)}^{-1}S_i\left[S_i - \mathfrak{S}{\left(\mathfrak{S} + S_i\right)}^{-1}S_i\right]^{-1}S_i,\\
        &= \mathfrak{S}{\left(\mathfrak{S} + S_i\right)}^{-1}S_i\left[S_i^{-1} + S_i^{-1}\mathfrak{S}S_i^{-1}\right]S_i = \mathfrak{S},
    \end{align*}
    independent of $S_i$, as required.
\end{proof}

It is also of note that in the Gaussian case $\Cov(X|\mathcal{L}\!=\!x_i)$ does not depend on the outcome $x_i$, but only on the covariance parameter $S_i$ which is not generally true in the non-Gaussian case. In the general case, we make the notational definition,
\begin{equation}\label{eq:covariance-X-given-L-eq-xi}
    \mathfrak{C}(x_i, S_i) \coloneqq \Cov(X | \mathcal{L}\!=\!x_i).
\end{equation}

We now look at what happens when the distribution $p_X$ is non-Gaussian. If the covariance $S_i$ from~\cref{eq:Gaussian-localization-choice} tends towards infinity, then the local covariance from~\cref{eq:ELKDE-Gaussian-covariance} should tend towards the global covariance of the distribution of $X$.

\begin{corollary}[Local covariance becomes global]\label{cor:local-covariance-becomes-global}
    When the Gaussian distribution assumptions in~\cref{thm:ELKDE-Gaussian-covariance} are relaxed and $X$ is taken from some arbitrary non-Gaussian distribution, the local covariance matrix $\widetilde{\Sigma}_i$ from~\cref{eq:ELKDE-Gaussian-covariance}, approaches the global covariance as the synthetic covariance $S_i$ from~\cref{eq:Gaussian-localization-choice} approaches infinity,
    \begin{equation}
        \lim_{\lvert S_i\rvert \to\infty}\widetilde{\Sigma}_i = \Cov(X),
    \end{equation}
    where $\Cov(X)$ is the global covariance of the (now) non-Gaussian random variable $X$, and $\lvert S_i\rvert$ is some notion of generalized variance.
\end{corollary}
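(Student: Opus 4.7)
The plan is to combine two separate limits: first, an algebraic simplification of the ELKDE covariance formula in~\cref{eq:ELKDE-Gaussian-covariance} that isolates the $S_i$-dependence cleanly, and second, a Bayes-rule argument showing that the conditional covariance $\Cov(X|\mathcal{L}=x_i)$ collapses to $\Cov(X)$ in the large-$S_i$ limit.

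First, I would rewrite the formula algebraically. Writing $C = \Cov(X|\mathcal{L}=x_i)$ and factoring $S_i - C = S_i(I - S_i^{-1} C)$ gives
\begin{equation*}
\widetilde{\Sigma}_i \;=\; C\,(S_i - C)^{-1} S_i \;=\; C\,(I - S_i^{-1} C)^{-1}.
\end{equation*}
In this form, the $S_i$ dependence appears only through $S_i^{-1}$, so once we know that $C$ remains bounded, the second factor tends to the identity as $S_i \to \infty$ (interpreted in the positive-definite ordering, i.e., the smallest eigenvalue of $S_i$ diverges so that $S_i^{-1} \to 0$).

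Second, I would show $C \to \Cov(X)$. By Bayes' rule and the Gaussian localization choice~\cref{eq:Gaussian-localization-choice},
\begin{equation*}
p_{X|\mathcal{L}=x_i}(x) \;\propto\; \mathcal{N}(x_i; x, S_i)\, p_X(x),
\end{equation*}
where, viewed as a function of $x$ for fixed $x_i$, the Gaussian factor flattens to a constant as $S_i \to \infty$: the ratio $\mathcal{N}(x_i; x, S_i)/\mathcal{N}(x_i; x', S_i)$ tends to $1$ for any $x,x'$. Hence, after normalization, $p_{X|\mathcal{L}=x_i}(x) \to p_X(x)$ pointwise. Passing this convergence through the second-moment integral that defines $C$ (via a dominated-convergence argument, which requires the mild assumption that $X$ has a finite second moment so the dominating integrand is uniformly integrable for large enough $S_i$) yields $C \to \Cov(X)$.

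Combining both ingredients, $\widetilde{\Sigma}_i = C(I - S_i^{-1}C)^{-1} \to \Cov(X) \cdot I = \Cov(X)$, as desired. The main obstacle is the second step: unlike in~\cref{thm:ELKDE-Gaussian-covariance}, we cannot invoke the closed-form Kalman update for $\Cov(X|\mathcal{L}=x_i)$ because $X$ is no longer Gaussian, so the convergence of the conditional covariance must be established by a direct integrability argument rather than by algebraic manipulation. The first step, by contrast, is a routine matrix identity and requires only that $S_i - C$ be invertible for $S_i$ sufficiently large, which follows from $C$ being bounded while the spectrum of $S_i$ grows without bound.
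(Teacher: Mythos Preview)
The paper does not actually supply a proof of this corollary; it is stated immediately after \cref{thm:ELKDE-Gaussian-covariance} and the text simply moves on to interpret it. So there is no paper proof to compare against line by line.

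Your plan is sound and fills the gap the paper leaves. The algebraic rewriting $\widetilde{\Sigma}_i = C(I - S_i^{-1}C)^{-1}$ is the natural simplification, and the Bayes-rule argument that $p_{X|\mathcal{L}=x_i}\to p_X$ as the Gaussian likelihood flattens is exactly the right mechanism for the non-Gaussian case, since the Kalman identity used in the proof of \cref{thm:ELKDE-Gaussian-covariance} is unavailable. You correctly flag the one place where care is needed: passing the limit inside the second-moment integral requires an integrability hypothesis (finite second moment of $X$), which the paper implicitly assumes throughout (e.g., in \cref{def:local-covariance}). The only minor point worth tightening is to be explicit about what ``$S_i\to\infty$'' means for a matrix---you do interpret it as the smallest eigenvalue diverging, which is what makes $S_i^{-1}\to 0$ and the dominated-convergence bound uniform; stating that up front would make the argument cleaner.
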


From this we can glean that the larger the synthetic covariance $S_i$ gets, the less local the information becomes, and the covariance approaches the global estimate. 
It is therefore reasonable to ask if the converse is also true: as the covariance parameter $S_i$ approaches zero, does the covariance in~\cref{eq:ELKDE-Gaussian-covariance} tend towards a local description of the covariance around the point $x_i$?

\begin{remark}[Generalized variance]\label{rem:generalized-variance}
    It is of note that generalized variance in \cref{cor:local-covariance-becomes-global} can take multiple forms. Common choices include,
    \begin{equation}
        \lvert S_i \rvert = \det S_i,
    \end{equation}
    which is the determinant of the matrix $S_i$, and,
    \begin{equation}
        \lvert S_i \rvert = \operatorname{tr} S_i,
    \end{equation}
    which is its trace.
    The choice of generalized variance is ancillary to the main results of this work.
\end{remark}

It is of note that the covariance~\cref{eq:ELKDE-Gaussian-covariance} in~\cref{thm:ELKDE-Gaussian-covariance} is only a good estimate in the case that the distribution of $X|\mathcal{L}$ is actually Gaussian. 
It is possible---either exactly, or numerically---that,
\begin{equation}
    S_i \not> \mathfrak{C}(x_i, S_i),
\end{equation}
which would make the covariance~\cref{eq:ELKDE-Gaussian-covariance} not positive definite, thus, in order to generalize~\cref{thm:ELKDE-Gaussian-covariance} to non-Gaussian distributions, an alternate formulation of the covariance must be chosen.

Utilizing the information we have gleaned from~\cref{thm:ELKDE-Gaussian-covariance}, from~\cref{cor:local-covariance-becomes-global}, and from the above discussion, we propose a definition of the local covariance matrix for generic, non-Gaussian distributions:
% \begin{definition}[Local covariance]
% \label{def:local-covariance}
%     Given a random variable $X$ with finite first two moments, and given a Gaussian random variable $\mathcal{L}|X=x_i$ that encapsulates our local uncertainty around $x_i$ with covariance $S_i$, from~\cref{eq:Gaussian-localization-choice}, the covariance that extends the local behavior of the distribution globally around the point $x_i$ is given by,
%     %
%     \begin{equation}
%         \widetilde{\Sigma}_i \!=\!\! \lim_{S_i\to0}\!\Pi_{\text{SP}(n)}\!\left(\!\Cov(X|\mathcal{L}\!=\!x_i){\left[S_i - \Cov(X|\mathcal{L}\!=\!x_i)\right]}^{-1}\! S_i\right),
%     \end{equation}
%     %
%     where $\Pi_{\text{SP}(n)}$ is the projection of a matrix onto the set of $n$-dimensional symmetric positive definite (SPD) matrices.
% \end{definition}
%
\begin{definition}[Local covariance]
\label{def:local-covariance}
    Given a random variable $X$ with finite first two moments, and given a Gaussian random variable $\mathcal{L}|X\!=\!x_i$ that encapsulates our local uncertainty around $x_i$ with covariance $S_i$, from~\cref{eq:Gaussian-localization-choice}, the covariance that extends the local behavior of the distribution globally around the point $x_i$ is given by,
    \begin{equation}\label{eq:local-covariance-optimization}
    \begin{gathered}
        \widetilde{\Sigma}_i \coloneqq \widetilde{\Sigma}(x_i, S_i) = \mathfrak{C}(x_i, S_i) {\left[S_i - \mathfrak{C}(x_i, S_i)\right]}^{-1}\! S_i,\\
        \text{where}\quad \left\{\begin{aligned}
            S_i &= \argmin_{S\in\text{SP}(n)} \lvert S\rvert,\\\text{such that}&\quad\widetilde{\Sigma}(x_i, S_i) \in \text{SP}(n), 
        \end{aligned}\right\},
        \end{gathered}
    \end{equation}
    meaning that the local covariance $\widetilde{\Sigma}_i$ is the covariance that minimizes its argument $S_i$ while remaining in the set of all $n$-dimensional symmetric positive definite matrices, $\text{SP}(n)$.
\end{definition}

\Cref{def:local-covariance} is one attempt at extending~\cref{thm:ELKDE-Gaussian-covariance} to non-Gaussian distribution through the same reasoning as in~\cref{cor:local-covariance-becomes-global}. However, it is not the only possible choice of extension. The optimality of formulation proposed therein is of relevant interest but beyond the scope of this work.

Note that when the random variable $X$ is Gaussian, the proposed sense of local covariance in~\cref{def:local-covariance} is exactly equivalent to the derivation in~\cref{thm:ELKDE-Gaussian-covariance}.

\begin{remark}[Using $\mathfrak{C}(x_i, S_i)$]
    In~\citep{popov2023elengmf}, on which this work is based, a scaling of the conditional covariance matrix
    \begin{equation}
        \Sigma_i = s_{\mathcal{L},i}\mathfrak{C}(x_i, S_i),
    \end{equation}
    where $s_{\mathcal{L},i}$ is a scaling factor determined for each covariance,
    was used for the covariance estimates. As there are no formal guarantees on when such a covariance is optimal, this idea is not further explored in this work.
\end{remark}

%%%%%%%%%%%%%%%%%%%%%%%%%%%%%%%%%%%%%%%%%%
\subsection{Revisiting the bimodal example}
%%%%%%%%%%%%%%%%%%%%%%%%%%%%%%%%%%%%%%%%%%

\begin{figure}
    \centering
    \includegraphics[width=0.49\linewidth]{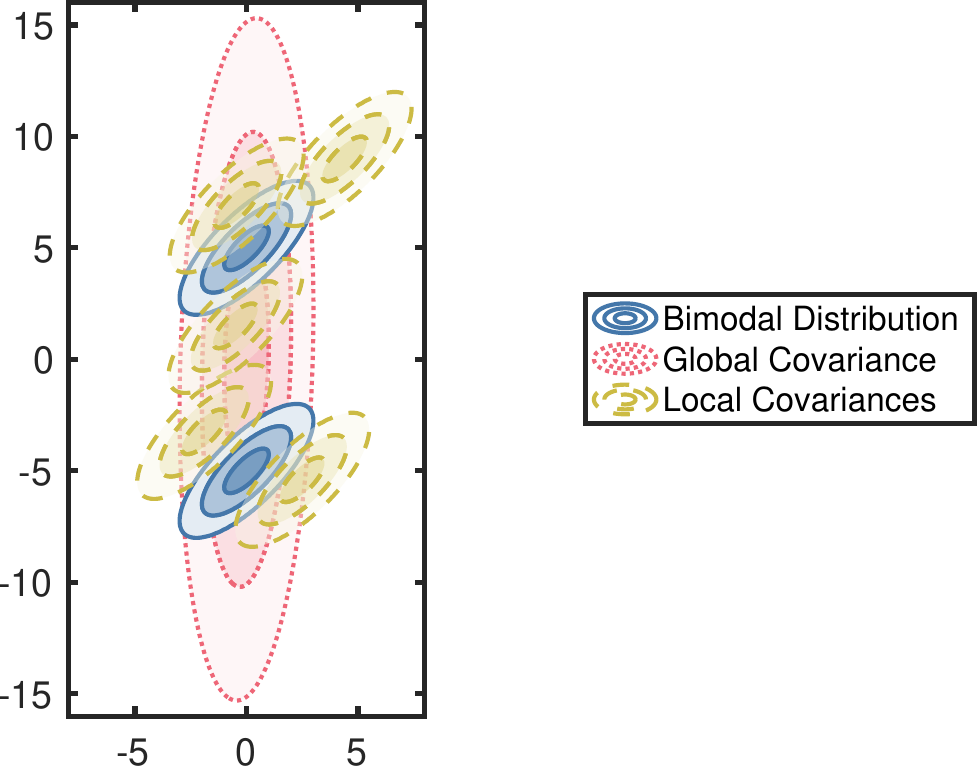}
    \caption{A demonstration of the local covariance technique~\cref{def:local-covariance} on the bimodal example~\cref{sec:bimodal-example}. The blue ellipses with solid lines represent the 1, 2, and 3-$\sigma$ bounds on the true bimodal distribution. The large red vertical ellipses with dotted outlines represent the global covariance. The yellow ellipses with dashed outlines represent the local covariances centered at hand-sampled points from the true distribution.}
    \label{fig:bimodal-local-covariance}
\end{figure}

We now revisit the bimodal example in~\cref{sec:bimodal-example}, and show that the proposed ELKDE method is robust to increase in the separation half-width parameter $\nu$.

Take a sample $x_i$ from the distribution defined by~\cref{eq:GM2-distribution}, then the distribution of $X$ given $\mathcal{L} = x_i$, would be defined by,
\begin{equation}
\begin{aligned}
    p_{X |\mathcal L=x_i}(x) = &\phantom{+}\,\,\, u_1\mathcal{N}(\ell_1 , \mathfrak{S}_{\text{local}}(\mathfrak{S}_{\text{local}} + S_i)^{-1}S_i)\\
    &+ u_2\mathcal{N}(\ell_2 , \mathfrak{S}_{\text{local}}(\mathfrak{S}_{\text{local}} + S_i)^{-1}S_i),
    \end{aligned}
\end{equation}
where the means $\ell_1$ and $\ell_2$ are of no consequence.

As $\nu\to\infty$, the sample almost surely comes from the mode whose mean is closest. Without loss of generality, assume that this is the first mode, thus the weights collapse, $u_1 \to 1$ and $u_2 \to 0$, which can be seen from the EnGMF equations~\cref{eq:EnGMF-update}.
Therefore in this case the distribution collapses to a simple Gaussian, thus by~\cref{thm:ELKDE-Gaussian-covariance}, the covariance around the point $x_i$ by the ELKDE method is a scaling of $\mathfrak{S}_{\text{local}}$, and not of $\mathfrak{S}_{\text{global}}$ as by the CKDE method.

Moreover, in the case of a fixed $\nu = 5$, for any point in the distribution, the local covariance as computed by the optimization problem in~\cref{def:local-covariance}, is always the local covariance. 
This can be visually seen in~\cref{fig:bimodal-local-covariance} where the bimodal distribution, the global covariance and the local covariances at select points are plotted.
As can be seen, the local covariances produced by~\cref{def:local-covariance}, with determinant generalized variance as a metric, are identical to the covariances of the bimodal distribution, and do not resemble the global covariance.

We thus show that the ELKDE method is robust to the degenerate behavior shown in~\cref{sec:bimodal-example}.

%%%%%%%%%%%%%%%%%%%%%%%%%%%%%%%%%%%%%%%
\subsection{Practical Implementation}
%%%%%%%%%%%%%%%%%%%%%%%%%%%%%%%%%%%%%%%

There are several considerations to be made before a practical implementation of the covariance estimate in~\cref{def:local-covariance} can be attempted.

First we have to consider how to compute the covariance matrix $\mathfrak{C}(x_i, S_i)$, from \cref{eq:covariance-X-given-L-eq-xi}, given only the ensemble $\*X$. We begin by defining local weights,
\begin{equation}\label{eq:weights}
    w_{i,j} \propto \mathcal{N}(x_i ; x_j, S_i)
\end{equation}
such that the column vector of weights $\*w_i$ defines the importance of each  ensemble member given the random variable $\mathcal{L}=x_i$. The covariance can then be estimated from the ensemble and the importance weights by the unbiased estimator,
\begin{equation}\label{eq:weighted-covariance}
    \mathfrak{C}(x_i, S_i) \approx \frac{1}{1 - \mathbf{w}_i^T\mathbf{w}_i}\left[\mathbf{X}\left(\bdiag \mathbf{w}_i - \mathbf{w}_i\mathbf{w}_i^T\right)\mathbf{X}^T\right],
\end{equation}
where $\bdiag$ is the vector to diagonal matrix operator.

A practical numerical consideration is the problem of degenerate weights, meaning that the covariance estimate in~\cref{eq:weighted-covariance} can be numerically singular. A simple way to overcome this is by slightly nudging the weights towards uniformity,
\begin{equation}
    w_{i, j} \leftarrow (1 - \alpha) w_{i, j} + \frac{\alpha}{N},
\end{equation}
where the factor $\alpha$ is chosen to be small. In this work we fix $\alpha = 10^{-4}$.

The local covariance in~\cref{def:local-covariance} requires the solution of a matrix-valued constrained optimization problem, which is practically useful.
We can approximate the local covariance optimization~\cref{eq:local-covariance-optimization} by the simpler formula,
\begin{equation}\label{eq:local-covariance-simple}
        \widetilde{\Sigma}_i \!=\!\! \lim_{\lvert S_i\rvert\to0}\!\Pi_{\text{SP}(n)}\!\left(\!\mathfrak{C}(x_i, S_i){\left[S_i - \mathfrak{C}(x_i, S_i)\right]}^{-1}\! S_i\right),
\end{equation}
where $\Pi_{\text{SP}(n)}$ is the projection of a matrix onto the set of $n$-dimensional symmetric positive definite (SPD) matrices.

The next consideration is the choice of covariance matrix $S_i$. If our ensemble $\*X$ is finite, then it is impractical to choose $S_i$ to be arbitrarily small, thus the aim is to find a formula for $S_i$ that tends towards zero as the ensemble size grows, but still encapsulates numerically useful information about the distribution from the finite ensemble.

As our aim is to limit the impact of far away samples and only consider the impact of local samples, we choose,
\begin{equation}\label{eq:choice-of-Si}
    S_i = r_i^2 I,
\end{equation}
where the scalar `radius', $r_i$ is chosen such that the weights~\cref{eq:weights} are meaningful in some sense. To that end we again take inspiration from Kernel density estimation literature~\citep{silverman2018density} and from the ensemble transform particle filter~\citep{reich2013nonparametric} in determining the localization radius. For the $i$th particle, take the distance to the $\sqrt{N}$th (rounded) nearest neighbor according to some distance $d(i,k)$, and call it $d_i$.
This idea is loosely based on the use of inter-particle distance in the ensemble transform particle filter (ETPF)~\citep{reich2013nonparametric}, though in the ETPF the inter-particle distance is used to solve an optimal transport problem.
We take the localization radius to be a scaling of this distance,
\begin{equation}\label{eq:scaled-radius}
    r_i = s_r d_i,
\end{equation}
where $s_r$ is the radius scaling factor (in this work taken to be $s_r = 1$). Thus the covariance $S_i$ should converge to $0$ as $N\to\infty$ in probability, as long as the distribution of interest has connected support. The above procedure is a modification of the one used in~\citep{zucchelli2024bayesian}.

\begin{remark}
    Note that taking $r_i\to0$ is equivalent to taking any measure of generalized variance~\cref{rem:generalized-variance} to zero, meaning that $\lvert S_i \rvert\to0$ as in  \cref{eq:local-covariance-simple}.
\end{remark}

\begin{remark}[Using $S_i$ as the local covariance]
An immediately obvious question to ask, is why not just use $S_i$ for the covariances in~\cref{eq:full-KDE-estimate} instead, such as,
\begin{equation} \label{eq:alt-GMM}
    \widehat{p}(x) = \sum_{i=1}^N \mathcal N(x;x_i,S_i).
\end{equation}
While this estimate would converge to the exact distribution from which the samples originate given that the distribution has support over all real space, it would not converge to the optimal estimate in~\cref{thm:optimal-bandwidth}.
Additionally, the distribution in~\cref{eq:alt-GMM} would not even account for the global scaling of the distribution, as the covariance would be diagonal.
\end{remark}

The final consideration is finding a useful projection operator $\Pi_{\text{SP}(n)}$ in~\cref{def:local-covariance}. In order to see the two different ways in which this projection can be performed, it is useful to think about the logarithm of the covariance in~\cref{eq:ELKDE-Gaussian-covariance},
\begin{align}
    \log \widetilde{\Sigma}_i &= \log\left[\Cov(X|\mathcal{L}=x_i){\left[S_i - \Cov(X|\mathcal{L}=x_i)\right]}^{-1} S_i\right], \label{eq:log-full} \\
    &\begin{multlined}
        =\log\Cov(X|\mathcal{L}=x_i) - \log\left[S_i - \Cov(X|\mathcal{L}=x_i)\right]\\ +  \log S_i,\label{eq:log-partial}
    \end{multlined}
\end{align}
as the logarithm of the covariance merely has to be a symmetric matrix. 
There are two key elements, first ensuring that the final covariance estimate is SPD, which is equivalent to ensuring that~\cref{eq:log-full} is symmetric and real, and the second is ensuring that certain key constituent covariances in the calculation themselves are SPD, which is equivalent to ensuring that every matrix logarithm in~\cref{eq:log-partial} is symmetric and real.

Let the operator on the arbitrary symmetric matrix $A$,
\begin{equation}
    \mathcal{E}_{\epsilon} A,
\end{equation}
define the operation that bounds the eigenvalues of $A$ by $\epsilon$ from below.
We can first think about ensuring that the matrix~\cref{eq:log-full} is symmetric and real, then
one possible projection operator is,
\begin{equation}\label{eq:naive-eigenvalue-bound}
    \Pi_{\text{SP}(n)} = \mathcal{E}_{\epsilon},
\end{equation}
which simply eliminates possible negative eigenvalues.

If we instead concentrate on ensuring that the logarithms in~\cref{eq:log-partial} are symmetric and real, then one possible projection operator is,
\begin{equation}\label{eq:smart-eigenvalue-bound}
    \Pi_{\text{SP}(n)}(\cdot) = \mathcal{E}_{\epsilon_1}\left[\Cov(X|\mathcal{L}){\left[\mathcal{E}_{\epsilon_2}(S_i - \Cov(X|\mathcal{L}))\right]}^{-1} S_i\right],
\end{equation}
which bounds the eigenvalues not only of the matrix, but of the constituent terms as well.

We provide an algorithmic overview of the E-localization procedure for finding the local covariance matrices in the ELKDE in~\cref{alg:ELKDE}.

\begin{algorithm}[t]
 \caption{Local covariance estimation in the ELKDE}
 \label{alg:ELKDE}
 \begin{algorithmic}[1]
 \renewcommand{\algorithmicrequire}{\textbf{Input:}}
 \renewcommand{\algorithmicensure}{\textbf{Output:}}
 \REQUIRE  Ensemble $\*X_N$, projection function $\Pi_{\text{SP}(n)}$
 \ENSURE  E-localized covariances $\{\Sigma_i\}_{i=1}^N$
  \FOR {$i = 1$ to $N$}
  \STATE{\% Find the distances between $x_i$ and other samples}
  \FOR {$j = 1$ to $N$}
  \STATE $d_{ij} \xleftarrow[]{} \lVert{x_i - x_j}\rVert$
  \ENDFOR
  \STATE {$r_i \xleftarrow[]{} \sqrt{N}$th sorted $d_{ij}$}
  \STATE {$S_i \xleftarrow[]{} r_i^2 I_n$}
  \STATE {\% Find the weights of the local information}
  \FOR {$j = 1$ to $N$}
  \STATE {$a_{ij} \xleftarrow[]{} \log\mathcal{N}(x_j ; x_i, S_i)$}
  \ENDFOR
  \STATE{\% Numerically stable algorithm for computing the weights}
  \STATE{$\*w_{i} \xleftarrow[]{} \exp\left(\*a_i - \logsumexp(\*a_i)\right)$}
  \STATE{$\Cov(X|\mathcal{L}) \xleftarrow[]{}$ by \cref{eq:weighted-covariance}}
  \STATE {\% Compute the unprojected estimate of the covariance}
  \STATE{$\widetilde{\Sigma}_i\xleftarrow[]{}\Cov(X|\mathcal{L}){\left[S_i - \Cov(X|\mathcal{L})\right]}^{-1} S_i$}
  \STATE {\% Compute the projected and scaled covariance}
  \STATE{$\Sigma_i = \beta^2_N\Pi_{\text{SP}(n)}(\widetilde{\Sigma}_i)$}
  \ENDFOR
 \end{algorithmic}
 \end{algorithm}

% \begin{remark}[Generalized estimate]\label{rem:optimization}
% It is possible to construct a more robust estimate of the local covariance by generalizing the formula in~\cref{def:local-covariance} to through a minimization procedure,
% %
% \begin{equation}\label{eq:generalized-extension}
%     \widetilde{\Sigma}_i = \lim_{S_i\to0}\argmin_{\Sigma\in \text{SP}(n)}\left\lVert\Cov(X | \mathcal{L}) - \Sigma\left( \Sigma + S_i\right)^{-1}S_i\right\rVert_2^2,
% \end{equation}
% %
% which we can show has the gradient,
% %
% \begin{equation}
% \begin{aligned}
%     \nabla_L c(L) &= \begin{multlined}
%         -4 W^{-1}S_i\\\cdot \left(\Cov(X|\mathcal{L}) - S_i + S_i W^{-1}S_i\right)S_i W^{-1}L,
%     \end{multlined}\\
%     W &= LL^T + S_i,
% \end{aligned}
% \end{equation}
% %
% where $c$ stands for the cost in~\cref{eq:generalized-extension}, and which can be efficiently computed using only one matrix decomposition.
% This extension would involve an optimization procedure over covariance matrices, and is thus of limited utility, and is not further analyzed in this work.
% \end{remark}

%%%%%%%%%%%%%%%%%%%%%%%%%%%%%%%%%%%%%%%%%%%%%%%%%%%
\section{Numerical Experiments}
\label{sec:numerical-experiments}
%%%%%%%%%%%%%%%%%%%%%%%%%%%%%%%%%%%%%%%%%%%%%%%%%%%

The goal of our numerical experiments is to first showcase the utility of the ELKDE approach on a simple-to-visualize data set, for which we use a simple bivariate spiral.  Our second experiment, utilizing a more complicated chaotic dynamical system fulfils the second goal, to showcase the ELKDE methodology on the EnGMF, and thus showcase its utility for sequential filtering.

\subsection{A bivariate spiral}

\begin{figure}[tp]
    \centering
    \includegraphics[width=0.49\linewidth]{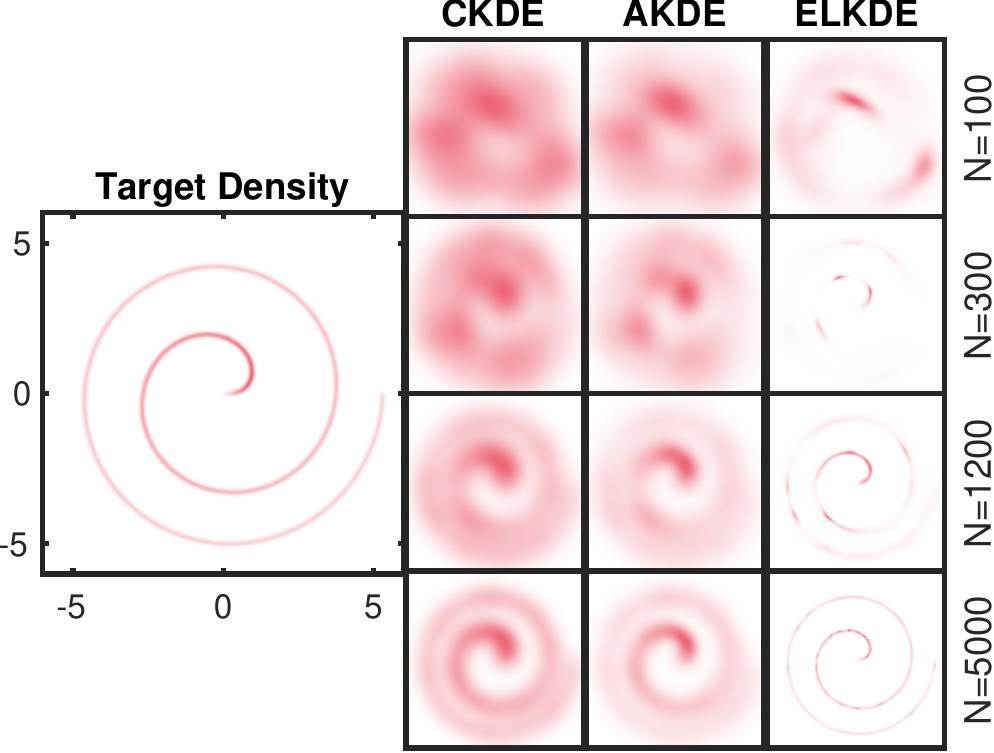}
    \caption{A qualitative look at the accuracy of the different KDE methods explored in this work for the continuous Gaussian mixture model described in~\cref{eq:continuous-GMM-example}. The true distribution is shown on the left, the canonical KDE method is labeled by `CKDE', the adaptive KDE method is labeled as `AKDE', and the E-localized KDE method is labeled as `ELKDE'.}
    \label{fig:spiral-figure}
\end{figure}

\pgfplotsset{clean/.style={axis lines*=left,
        axis on top=true,
        axis x line shift=0.0em,
        axis y line shift=0.75em,
        every tick/.style={black, thick},
        axis line style = ultra thick,
        tick align=outside,
        clip=false,
        major tick length=4pt}}

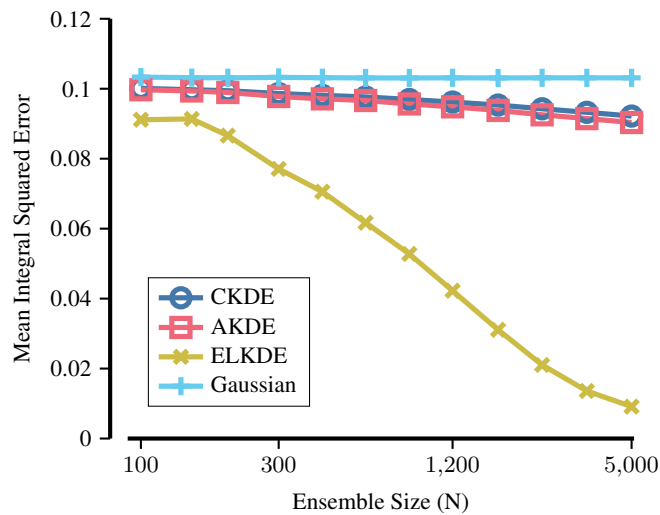
\begin{figure}[tp]
    \centering
    \resizebox {0.5\textwidth} {!} {
    \begin{tikzpicture}
    \begin{semilogxaxis}[clean,
        cycle list name=tol,
        %no markers,
        xmode=log,
        log ticks with fixed point,
        xtick={100, 300, 1200, 5000},
        table/col sep=comma,
        xmin = 90,
        xmax = 5100,
        ymin = 0,
        ymax = 0.12,
        ytick={0,0.02, 0.04,0.06, 0.08,0.1, 0.12},
        yticklabels={0,0.02, 0.04,0.06, 0.08,0.1, 0.12},
        xlabel = {Ensemble Size (N)},
        ylabel = {Mean Integral Squared Error},
        every axis plot/.append style={line width=2pt, mark size=3.5pt},
        legend style={at={(0.2,0.23)},anchor=center},
        legend cell align={left}]
    \addplot table [x=N, y=KDE, col sep=comma] {data/nonGMISEresults.csv};
    \addlegendentry{CKDE };
    \addplot table [x=N, y=AKDE, col sep=comma] {data/nonGMISEresults.csv};
    \addlegendentry{AKDE };
    \addplot table [x=N, y=LKDE, col sep=comma] {data/nonGMISEresults.csv};
    \addlegendentry{ELKDE };
    \addplot table [x=N, y=ND, col sep=comma] {data/nonGMISEresults.csv};
    \addlegendentry{Gaussian };
    \end{semilogxaxis}
    \end{tikzpicture}
    }
    \caption{A quantitative look at the accuracy of the different KDE methods explored in this work for the continuous Gaussian mixture model described in~\cref{eq:continuous-GMM-example} of ensemble size versus error in terms of MISE. The canonical KDE method is labeled by `CKDE', the adaptive KDE method is labeled as `AKDE', and the E-localized KDE method is labeled as `ELKDE'. Additionally, the approximation by the empirical Gaussian distribution is provided as a baseline.}
    \label{fig:bivariate-experiment}
\end{figure}

Our first numerical experiment is meant to test the accuracy of the ELKDE in a highly non-Gaussian setting.

Consider the following distribution of continuous Gaussian mixture~\citep{zucchelli2023gif} form,
\begin{equation}\label{eq:continuous-GMM-example}
    p_X(x) = \frac{1}{4\pi}\int_0^{4\pi} \mathcal{N}(x ; \mathfrak{m}(z), \sigma^2 I_2)\,\mathrm{d} z,
\end{equation}
where $I_2$ is the bivariate identity matrix, and the constituent mean,
\begin{equation}
    \mathfrak{m}(z) = \frac{3}{2}\sqrt{z}\begin{bmatrix} \cos(z) \\ \sin(z)\end{bmatrix},
\end{equation}
represents a Fermat spiral~\citep{lawrence2013catalog}, and the small covariance, defined by $\sigma^2 = 2^{-8}$, represents the local covariance of each one of the infinitesimal modes.

The mean of this distribution is given by the closed form expression,
\begin{equation}
    \mathbb{E}[X] = \frac{3}{8\sqrt{2\pi}}\begin{bmatrix}
        -S\!\left(2 \sqrt{2}\right)\\
   C\!\left(2 \sqrt{2}\right)-2\sqrt{2}
    \end{bmatrix}\approx \begin{bmatrix}
        -0.06\\-0.35
    \end{bmatrix},
\end{equation}
where $S$ and $C$ are the Fresnel functions~\citep{olver2010digital},
and the covariance $\Cov(X) = \mathfrak{S}$ is,
\begin{equation}
\begin{aligned}
    \mathfrak{S}_{1,1} &=
    \frac{9 \pi }{4}-\frac{9 S\!\left(2 \sqrt{2}\right)^2}{128 \pi }+\frac{1}{256}\\ 
    \mathfrak{S}_{2,1} &=-\frac{9 \left(\left(2 \sqrt{2}-C\!\left(2 \sqrt{2}\right)\right) S\!\left(2
   \sqrt{2}\right)+8 \pi \right)}{128 \pi }\\
   \mathfrak{S}_{2,2} &= \frac{9 \left(4 \sqrt{2} C\left(2 \sqrt{2}\right)-C\left(2 \sqrt{2}\right)^2-8+32 \pi
   ^2\right)}{128 \pi }\\&\phantom{=}+\frac{1}{256}
\end{aligned}
\end{equation}
which is approximately,
\begin{equation}
    \mathfrak{S} \approx \begin{bmatrix}7.07&-0.58\\-0.58&6.95\end{bmatrix}.
\end{equation}

This distribution is particularly challenging as it requires an increasing number of of Gaussian mixture terms to approximate accurately, as it cannot be fully represented by a finite Gaussian mixture. Additionally, there is three orders of magnitude separating the local and global covariance terms, thus the CKDE and AKDE techniques should have particular difficulty with this problem.

We present results on various amount of samples $N$ ranging from $N=100$ to $N=5000$, and make use of the MISE~\cref{eq:MISE} as a measure of error. We compute the MISE over 12 Monte Carlo runs for each ensemble size $N$. The samples from the spiral distribution~\cref{eq:continuous-GMM-example} are computed exactly, while the MISE computation uses a uniformly spaced approximation to the integral with 10000 evenly spaced points of $z$.
We compare the CKDE, the AKDE, and the ELKDE methods. 

For this experiment we make use of the second projection method from~\cref{eq:smart-eigenvalue-bound}, with parameters $\epsilon_1 = 10^{-4}$ and $\epsilon_2 = 10^{-2}$ as this method produces covariance estimates that are in line with performing the optimization procedure in~\cref{def:local-covariance}.

A qualitative look at the accuracy of the methods can be seen in~\cref{fig:spiral-figure}. It can be seen that the CKDE method suffers from the fact that the global estimate of the covariance is not appropriate for approximating the target probability density. The AKDE method seems to be able to better resolve the form of the spiral, though not significantly better than the CKDE method. The ELKDE method appears to be able to fully resolve the correct spiral with $N=1200$ and $N=5000$ samples, with a partially resolved spiral visible for as little as $N=300$ samples.

For the quantitative experiment, we additionally look at approximating the distribution by the empirical Gaussian as a baseline. As can be seen in~\cref{fig:bivariate-experiment}, the CKDE is a slight improvement over the Gaussian approximation, with the AKDE having  a noticeable, but marginal impact on the error of the CKDE method, though both global methods barely do better than the Gaussian baseline.
The ELKDE method, on the other had, shows significant improvement, especially in the slope of the improvement with respect to the number of samples. For $N=5000$, the ELKDE method results in a MISE of one order of magnitude less than the other methods.

%%%%%%%%%%%%%%%%%%%%%%%%%%%%%%%%%%
\subsection{Lorenz '63 sequential filtering example}
%%%%%%%%%%%%%%%%%%%%%%%%%%%%%%%%%%

\begin{figure}[tp]
    \centering
    \begin{tikzpicture}
    \begin{semilogxaxis}[clean,
        cycle list name=tol,
        %no markers,
        xmode=log,
        log ticks with fixed point,
        xtick=data,
        table/col sep=comma,
        xmin = 23,
        xmax = 550,
        ymin = 1.9,
        ymax = 6.55,
        xlabel = {Ensemble Size (N)},
        ylabel = {Mean Spatio-temporal RMSE},
        every axis plot/.append style={line width=2pt, mark size=3.5pt},
        legend style={at={(0.67,0.85)},anchor=center},
        legend cell align={left}]
    \addplot table [x=N, y=EnGMF, col sep=comma] {data/lorenz63newresults.csv};
    \addlegendentry{EnGMF };
    \addplot[dashed,color=tolblue,line width=1.25pt,forget plot,dash pattern=on 4pt off 8pt]%
    table [x=N, y=EnGMFp3s, col sep=comma] {data/lorenz63newresults.csv};
    \addplot[dashed,color=tolblue,line width=1.25pt,forget plot,dash pattern=on 4pt off 8pt] table [x=N, y=EnGMFm3s, col sep=comma] {data/lorenz63newresults.csv};

    \addplot table [x=N, y=AEnGMF, col sep=comma] {data/lorenz63newresults.csv};
    \addlegendentry{AEnGMF };
    \addplot[dashed,color=tolred,line width=1.25pt,forget plot,dash pattern=on 4pt off 8pt]%
    table [x=N, y=AEnGMFp3s, col sep=comma] {data/lorenz63newresults.csv};
    \addplot[dashed,color=tolred,line width=1.25pt,forget plot,dash pattern=on 4pt off 8pt] table [x=N, y=AEnGMFm3s, col sep=comma] {data/lorenz63newresults.csv};

    \addplot table [x=N, y=ELEnGMF, col sep=comma] {data/lorenz63newresults.csv};
    \addlegendentry{ELEnGMF };
    %\pgfplotsset{cycle list shift=-1}
    \addplot[dashed,color=tolyellow,line width=1.25pt,forget plot,dash pattern=on 4pt off 8pt]%
    table [x=N, y=ELEnGMFp3s, col sep=comma] {data/lorenz63newresults.csv};
    %\pgfplotsset{cycle list shift=-1}
    \addplot[dashed,color=tolyellow,line width=1.25pt,forget plot,dash pattern=on 4pt off 8pt] table [x=N, y=ELEnGMFm3s, col sep=comma] {data/lorenz63newresults.csv};
    %\addlegendentry{ELEnGMF };
    
    \addplot[color=black!35,dotted] table [x=N,y expr=2.2165, mark=none]{data/L63ELEnGMF.csv};
    \addlegendentry{SIR ($N\rightarrow\infty$)};
    \end{semilogxaxis}
    \end{tikzpicture}
    \caption{A quantitative look at the performance of the EnGMF with CKDE prior estimation, the AEnGMF with AKDE prior estimation, and the ELEnGMF with ELKDE prior estimation for the Lorenz '63 equations. The $x$-axis represents the ensemble size $N$, and the $y$-axis represents mean spatio-temporal RMSE~\cref{eq:spatio-temporal-RMSE} over $12$ separate Monte Carlo runs. The solid lines represent the mean of the error while the dashed lines represent three standard deviations of the error over the Monte Carlo simulations. The gray dotted line represents the theoretical minimum RMSE computed using a sequential importance resampling (SIR) filter with $N=25000$ ensemble members and optimal rejuvenation.}
    \label{fig:lorenz63-experiment}
\end{figure}
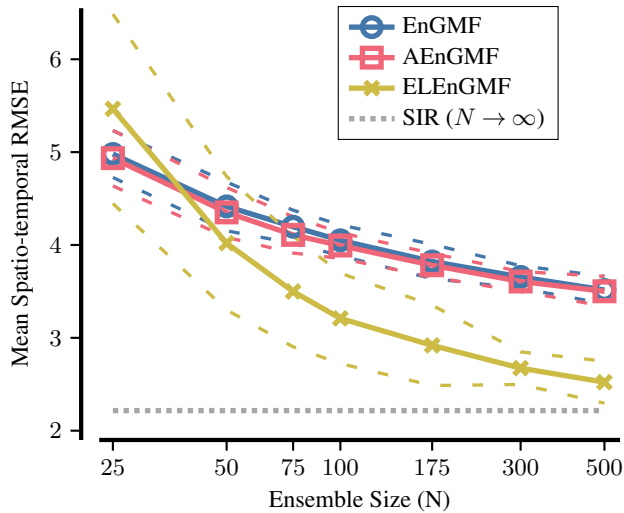

\begin{figure}[tp]
    \centering
    \begin{tikzpicture}
    \begin{semilogxaxis}[clean,
        cycle list name=tol,
        clip=true,
        %no markers,
        xmode=log,
        log ticks with fixed point,
        xtick=data,
        table/col sep=comma,
        xmin = 23,
        xmax = 550,
        ymin = 0,
        ymax = 2.7,
        xlabel = {Ensemble Size (N)},
        ylabel = {Mean SNEES},
        every axis plot/.append style={line width=2pt, mark size=3.5pt},
        legend style={at={(0.67,0.85)},anchor=center},
        legend cell align={left}]
    \addplot table [x=N, y=EnGMF, col sep=comma] {data/lorenz63newresults_snees.csv};
    \addlegendentry{EnGMF };
    \addplot[dashed,color=tolblue,line width=1.25pt,forget plot,dash pattern=on 4pt off 8pt]%
    table [x=N, y=EnGMFp3s, col sep=comma] {data/lorenz63newresults_snees.csv};
    \addplot[dashed,color=tolblue,line width=1.25pt,forget plot,dash pattern=on 4pt off 8pt] table [x=N, y=EnGMFm3s, col sep=comma] {data/lorenz63newresults_snees.csv};

    \addplot table [x=N, y=AEnGMF, col sep=comma] {data/lorenz63newresults_snees.csv};
    \addlegendentry{AEnGMF };
    \addplot[dashed,color=tolred,line width=1.25pt,forget plot,dash pattern=on 4pt off 8pt]%
    table [x=N, y=AEnGMFp3s, col sep=comma] {data/lorenz63newresults_snees.csv};
    \addplot[dashed,color=tolred,line width=1.25pt,forget plot,dash pattern=on 4pt off 8pt] table [x=N, y=AEnGMFm3s, col sep=comma] {data/lorenz63newresults_snees.csv};

    \addplot table [x=N, y=ELEnGMF, col sep=comma] {data/lorenz63newresults_snees.csv};
    \addlegendentry{ELEnGMF };
    %\pgfplotsset{cycle list shift=-1}
    \addplot[dashed,color=tolyellow,line width=1.25pt,forget plot,dash pattern=on 4pt off 8pt]%
    table [x=N, y=ELEnGMFp3s, col sep=comma] {data/lorenz63newresults_snees.csv};
    %\pgfplotsset{cycle list shift=-1}
    \addplot[dashed,color=tolyellow,line width=1.25pt,forget plot,dash pattern=on 4pt off 8pt] table [x=N, y=ELEnGMFm3s, col sep=comma] {data/lorenz63newresults_snees.csv};
    %\addlegendentry{ELEnGMF };
    
    \addplot[color=black!35,dotted] table [x=N,y expr=1, mark=none]{data/L63ELEnGMF.csv};
    \addlegendentry{SNEES=1};
    \end{semilogxaxis}
    \end{tikzpicture}
    \caption{A quantitative look at the performance of the EnGMF with CKDE prior estimation, the AEnGMF with AKDE prior estimation, and the ELEnGMF with ELKDE prior estimation for the Lorenz '63 equations. The $x$-axis represents the ensemble size $N$, and the $y$-axis represents the mean SNEES~\cref{eq:SNEES} over $12$ separate Monte Carlo runs. The solid lines represent the mean of the error while the dashed lines represent three standard deviations of the error over the Monte Carlo simulations. The gray dotted line represents the ideal SNEES of one. }
    \label{fig:lorenz63-snees}
\end{figure}

The final round of experiments aims to put the ELEnGMF to practical use in a sequential filtering experiment. In sequential filtering our aim is to find an optimal estimate of the truth $x^t$ at some time index $i$. This is performed by forecasting a state estimate to time index $i$ to obtain a prior and performing inference~\cref{eq:Bayesian-inference} to obtain a posterior estimate of the state $x^+_i$, then repeating the cycle over and over again \textit{ad infinitum}.

To this end we use the three variable Lorenz '63 equations~\citep{lorenz1963deterministic, strogatz2018nonlinear},
\begin{equation}
\begin{aligned}
    \dot{x}_1 &= 10 (x_2 - x_1),\\
    \dot{x}_2 &= x_1(28 - x_3) - x_2,\\
    \dot{x}_3 &= x_1 x_2 - \frac{8}{3}x_3,
\end{aligned}
\end{equation}
which is a chaotic test problem that is one of the foundational problems for particle filters~\citep{reich2015probabilistic}.

The following problem setup is taken from~\citep{popov2022adaptive,popov2023elengmf}. We perform inference every $\Delta t = 0.5$ time, with $5500$ sequential steps taken (the first $500$ of which are discarded to account for spinup---discarding errors from initial over- or under-confidence of the algorithm). 
We take the scalar range observation,
\begin{equation}
    h(x) = \left\lVert x - c_2\right\rVert_2,
\end{equation}
where $c_2 = \begin{bmatrix}6\sqrt{2} & 6\sqrt{2} & 27 \end{bmatrix}^T$ is the fixed point in the center of one of the wings of the Lorenz butterfly, with unbiased Gaussian observation error determined by the variance $R = 1$. All experiments are performed over 12 distinct Monte Carlo samples.

For the error metrics in this experiment, we look at the spatio-temporal root mean squared error (RMSE) and the scaled normalized estimation error squared (SNEES).
The spatio-temporal RMSE of the posterior means of the data with respect to the truth is,
\begin{equation}\label{eq:spatio-temporal-RMSE}
    \operatorname{RMSE}(x^+) = \sqrt{\frac{1}{\lvert x\rvert}\sum_{i,j} \left(x^+_{i,j} - x^t_{i,j}\right)^2}
\end{equation}
where $x^+$ is the temporal collection of posterior means, $x^t$ is the collection of true states of the system, $i$ is the time index, $j$ is the state index, and $\lvert x\rvert$ is the cardinality of the data. For the RMSE, the lower the better.
The SNEES~\citep{yun2022kernel} of the posterior means with respect to the truth is defined as,
\begin{equation}\label{eq:SNEES}
    \operatorname{SNEES}(x^+) = \frac{1}{n \lvert x \rvert}\sum_{i}(x^+_i - x^t_i)^T\left(\mathfrak{S}^+_{i,\text{global}}\right)^{-1}(x^+_i - x^t_i),
\end{equation}
where everything is the same as in~\cref{eq:spatio-temporal-RMSE}, except $\mathfrak{S}^+_{i,\text{global}}$, which represents the global covariance of the posterior uncertainty at time index $i$. A SNEES value of one is considered optimal, as the distribution of the error of the posterior with respect to the truth would have the same covariance as predicted by the filter. If the SNEES is less than one, then the filter is considered to be too conservative, while a filter with a SNEES greater than one is too confident in the estimate. In general, a conservative filter is preferable to one that is overconfident.

\begin{remark}
    The Lorenz '63 system has a Kaplan–Yorke dimension of $2.06$ (computed using a method based on~\citep{dieci2011numerical}) that is close to two, meaning that the covariance estimate in~\cref{eq:SNEES} can frequently become close to singular numerically. We thus discard outlier values (those that are greater than 100 under the sum) in order to ensure that these numerical effects don't bias our results in a manner that would make their utility hard to interpret.
\end{remark}

For this experiment we make use of the first projection method from~\cref{eq:naive-eigenvalue-bound}, with parameter $\epsilon_1 = 10^{-4}$ as this method produces covariance estimates that are similar to those obtained by performing the optimization procedure in~\cref{def:local-covariance}.

For the RMSE experiment in~\cref{fig:lorenz63-experiment}, it can be seen that the ELEnGMF performs worse than the CKDE or the AKDE in the case of $N=25$ ensemble members, but performs better in the case of $N \geq 100$ ensemble members, approaching the theoretical minimum RMSE (computed using a sequential importance resampling filter) at around $N=500$ ensemble members, having a significant improvement in error over the global covariance filters.

For the SNEES experiment in~\cref{fig:lorenz63-snees} it is evident that both the EnGMF and the AEnGMF behave almost identically, producing highly conservative estimates of the covariance of the system. This is not exactly surprising, as it can be shown~\citep{liu2016efficient} that the covariance of the prior distribution is a scaled by a factor of $1 + \beta^2_N$. The ELEnGMF is less conservative than the other two filters, as the individual estimates of the covariances allow the EnGMF to decrease the conservative nature of the prior estimate, translating to a less conservative estimate of the posterior.
While the trend of the SNEES for all three methods is to grow lower and lower relative to the ensemble size, as all the methods are equivalent to a bootstrap particle filter in the ensemble limit, the SNEES should increase after a certain point, though the amount of ensemble members needed to accomplish this is unknown, and impractical to compute.

\conclusions  %% \conclusions[modified heading if necessary]
\label{sec:conclusions}

In this work we have introduced the E-localization methodology for kernel density estimation (ELKDE) and have applied it to the ensemble Gaussian mixture filter, creating the E-localized ensemble Gaussian mixture filter (ELEnGMF).
We have shown that the ELKDE is theoretically equivalent to canonical KDE methods in the Gaussian case, and is empirically more accurate on a highly non-Gaussian spiral distribution.
We have also shown that this methodology addresses one issue---the disparity between local and global notions of covariance---that arises in the application of KDE methods to distributions such as those arising from non-linear dynamics.
Additionally, through the application of the methodology to the Lorenz '63 equations, we have shown that the ELEnGMF has the potential to be a superior particle filter to the EnGMF in the sequential filtering setup.

While this work has tackled some issues inherent to the EnGMF, it has left many as open problems, and has even introduced some new issues.
In particular the selection of localization radius $r_i$ in~\cref{eq:choice-of-Si} is an issue of immediate interest. The methodology presented in this work for tackling this issue clearly has nice empirical behavior, but its justification is largely heuristic. Exploring this choice in future work is desirable.
Another, more significant issue is the choice of projection method in~\cref{eq:naive-eigenvalue-bound} and \cref{eq:smart-eigenvalue-bound}, and whether such methods are even necessary, or if a more robust alternative is viable.

Future non-theoretic work would involve applying adaptive covariance parameterization techniques~\citep{popov2022adaptive} to the ELEnGMF, such that the choice of bandwidth scaling factor~\cref{eq:bandwidth-scaling-factor} and radius scaling factor~\cref{eq:scaled-radius} could be performed adaptively in the sequential filtering regime.
Another future direction would involve applying the ELEnGMF to a practical orbit determination problem~\citep{yun2022kernel}.

\authorcontribution{AAP, EMZ, and RZ were responsible for developing the initial concept and research direction. AAP and EMZ were responsible for the methodology. AAP was responsible for the software implementation. AAP wrote the initial draft. AAP, EMZ, and RZ reviewed and edited the manuscript} %% this section is mandatory

\competinginterests{The authors declare that no competing interests are present.} %% this section is mandatory even if you declare that no competing interests are present

%\disclaimer{TEXT} %% optional section

\begin{acknowledgements}
This work was sponsored in part by DARPA (Defense Advanced Research Projects Agency) under STTR contract number W31P4Q-21-C-0032, and sponsored in part by the Air Force Office of Scientific Research (AFOSR) under award number: FA9550-22-1-0419.
\end{acknowledgements}

%% REFERENCES

%% The reference list is compiled as follows:

% \begin{thebibliography}{}

% \bibitem[AUTHOR(YEAR)]{LABEL1}
% REFERENCE 1

% \bibitem[AUTHOR(YEAR)]{LABEL2}
% REFERENCE 2

% \end{thebibliography}

%% Since the Copernicus LaTeX package includes the BibTeX style file copernicus.bst,
%% authors experienced with BibTeX only have to include the following two lines:
%%
\bibliographystyle{copernicus}
\bibliography{bibfiles/covarianceshrinkage,bibfiles/em,bibfiles/engmf,bibfiles/filteringgeneral,bibfiles/kernelapproximation,bibfiles/misc,bibfiles/multifidelity,bibfiles/probability,bibfiles/problems,bibfiles/stochasticoptimization,bibfiles/banana}

\end{document}